\documentclass[12pt]{amsart}
\usepackage{graphicx}
\usepackage{amssymb}
\usepackage{epstopdf}
\usepackage{hyperref}
\usepackage{morefloats}

\setlength{\textheight}{8.0in}
\setlength{\textwidth}{6in}
\setlength{\topmargin}{-0.6in}
\setlength{\evensidemargin}{0in}
\setlength{\oddsidemargin}{0in}

\newtheorem{lemma}{Lemma}
\newtheorem{theorem}{Theorem}

\begin{document}

\title{Constructing and Counting Hexaflexagons}
\author{Marshall Hampton}

\begin{abstract}
Foldable paper constructions known as flexagons have been studied since 1939.  In this paper we review the construction of hexagonal flexagons (hexaflexagons) and compute the number of distinct hexaflexagons with $n$ faces.  
\end{abstract}

\maketitle

\section{Introduction}

The study of the curious paper folded objects known as flexagons began in 1939 with the discovery by Arthur Stone of the trihexaflexagon. As Martin Gardner relates in his 1956 article \cite{Gardner} (his first column in Scientific American!), Stone's discovery was investigated further by an all-star recreational math team: Richard Feynman, John W. Tukey, and Bryant Tuckerman, the `Flexagon Committee'.  Academic papers on flexagons began to appear in the 1950s \cite{Maunsell1954, OakleyWisner, Crampin1957, Wheeler1958} as well as books and other educational materials \cite{Jones1966}.

The combinatorics of flexagons are interesting, with connections to enumeration and generation of related combinatorial structures and computer science \cite{Callan2012, Karim}.  In this article we focus on hexaflexagons, described below, but there is a wide variety of other interesting flexagons which remain to be more fully explored mathematically \cite{Pook2009}.

\section{Constructing hexaflexagons}

A general procedure for constructing hexaflexagons was developed by Stone and the Flexagon Committee.  Unfortunately they did not publish their findings but the procedure has been described by others, in slightly varying ways \cite{Conrad, Jackson, Nishi2008}.  We will illustrate this construction from an iterative viewpoint, going from the basic trihexflexagon up to the three hexahexaflexagons.

Consider first the trihexaflexagon, shown below with labels for the top and bottom sides, respectively, of each triangle.

\begin{center}
\includegraphics[width=3.8in]{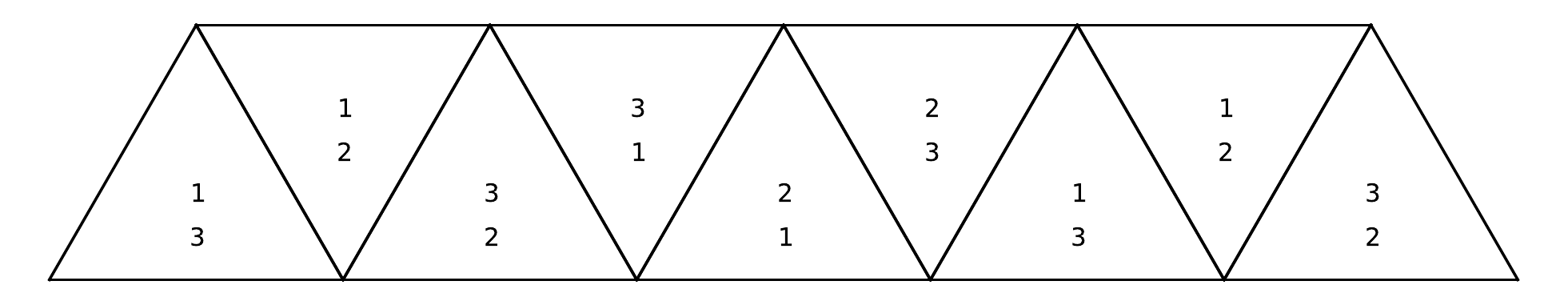}
\end{center}

The flexagon is folded from the pattern by folding adjacent faces together if they have the same label.  For the trihexaflexagon above, the top faces can all be folded in pairs, and then two more folds are required to reach the final hexagonal form (then the beginning and ending edges must be taped together; alternatively an extra triangle can be added and then two faces are glued together).

We can generate the top and bottom labels from a path in what we will call the {\it pattern} of the hexaflexagon, which keeps track of the orientation and location of the required folds:

\begin{center}
\includegraphics[width=2in]{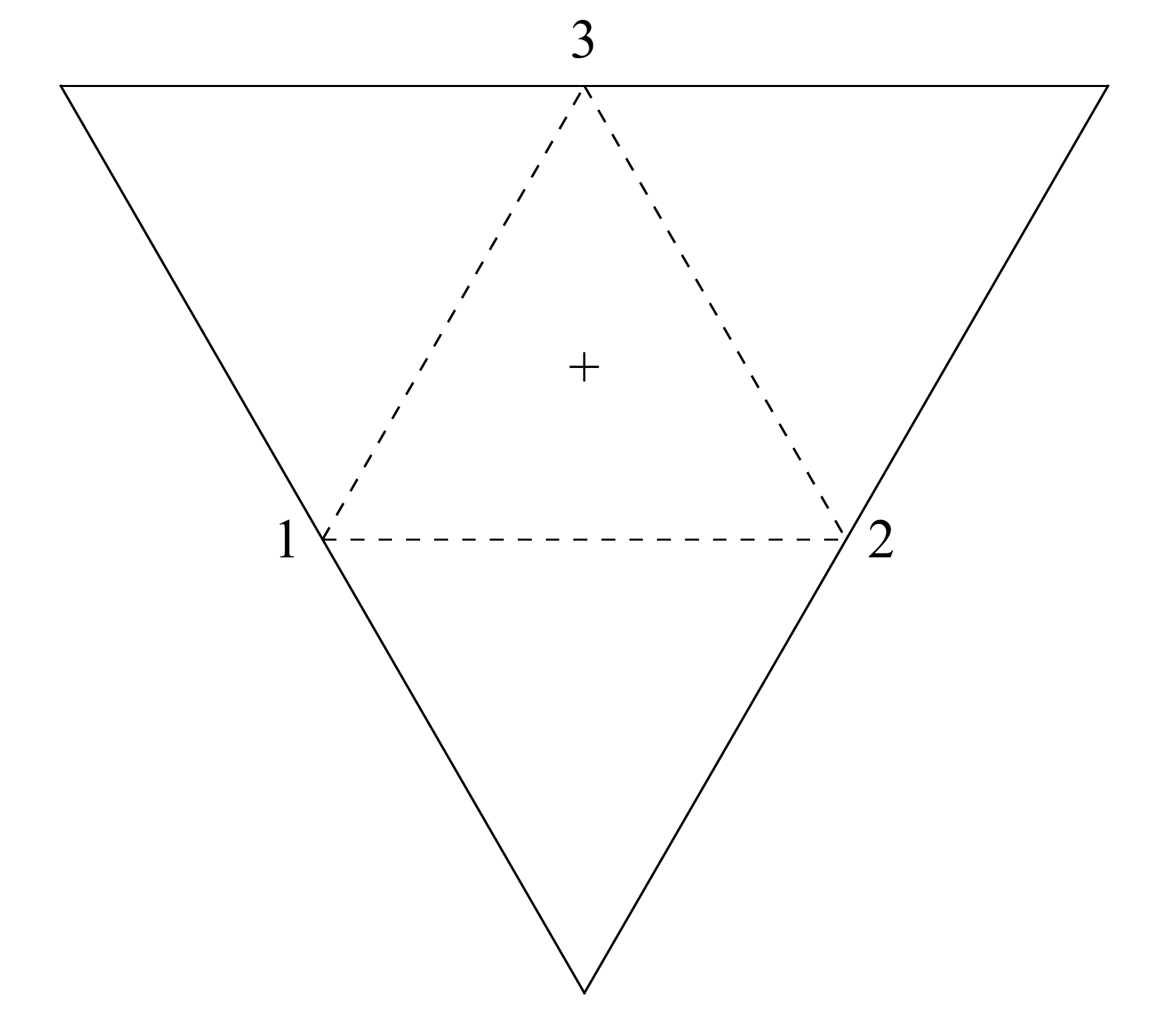}
\end{center}

The outer triangle is labeled counter-clockwise from $1$ to $n$, in this case $n=3$.
We begin at the point labeled $1$ on the inscribed triangle,  and then proceed to travel on the inscribed triangle, starting counter-clockwise.  The inscribed triangle has an orientation, here indicated by ``$+$''.  Recording both the vertices and the signs in order, we get the label sequence $(1,2,3)$ and the signs $(+,+,+)$.  We will represent the sign sequence by $1$s and $-1$s, so here it would be $(1,1,1)$.  

To obtain the labeled hexaflexagon, we begin with a triangle, and continue in one direction for as long as the sign sequence does not change. We repeat this three times.  For the trihexaflexagon, all of the signs are the same, so we end up with a straight line of $9$ triangles.  We obtain the top and bottom labels by the following procedure:
\begin{enumerate}
\item Use the label sequence to alternately give a top and then bottom label:
\begin{verbatim}
1   3
  2  
\end{verbatim}

\item If $n$ is odd, as it here ($n$=3), we extend this pattern twice by alternating between starting with the bottom and top labels:
\begin{verbatim}
1   3   2   1   3
  2   1   3   2  
\end{verbatim}

For even $n$ the pattern is repeated always starting with the top label.

\item The remaining labels are filled in with values one less than the other one for that triangle; with $0$ replaced by $n$:
\begin{verbatim}
1 1 3 3 2 2 1 1 3
3 2 2 1 1 3 3 2 2
\end{verbatim}

\end{enumerate}

We can add another set of six triangles - an additional hexaflexagon face - to the hexaflexagon by extending the pattern with another triangle.  The added triangle has an orientation opposite to that of the one it is being added to.

\begin{center}
\includegraphics[width=2in]{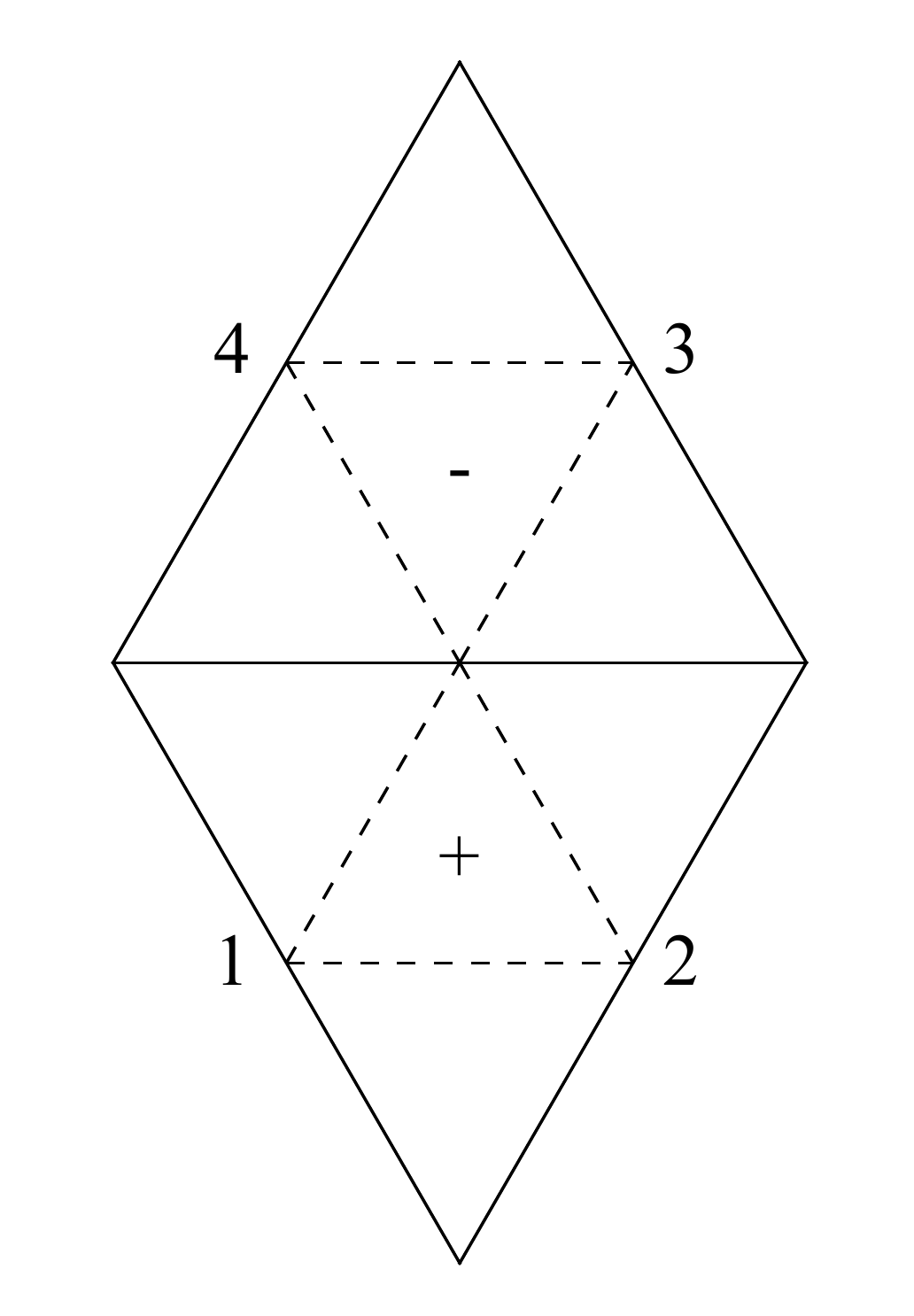}
\end{center}

Again tracing through the dashed inscribed path, we get the label sequence $(1,2,4,3)$, which gives us the following signs and  top and bottom labels:

$$\begin{array}{cc} 
\text{signs: } & (1,1,-1,-1) \\ 
\text{top: } & (1, 1, 4, 2, 1, 1, 4, 2, 1, 1, 4, 2, 1) \\
\text{bottom: } & (4, 2, 3, 3, 4, 2, 3, 3, 4, 2, 3, 3, 4) \end{array}$$

Following same procedure with the signs, we get the only tetrahexaflexagon:

\begin{center}
\includegraphics[width=3.5in]{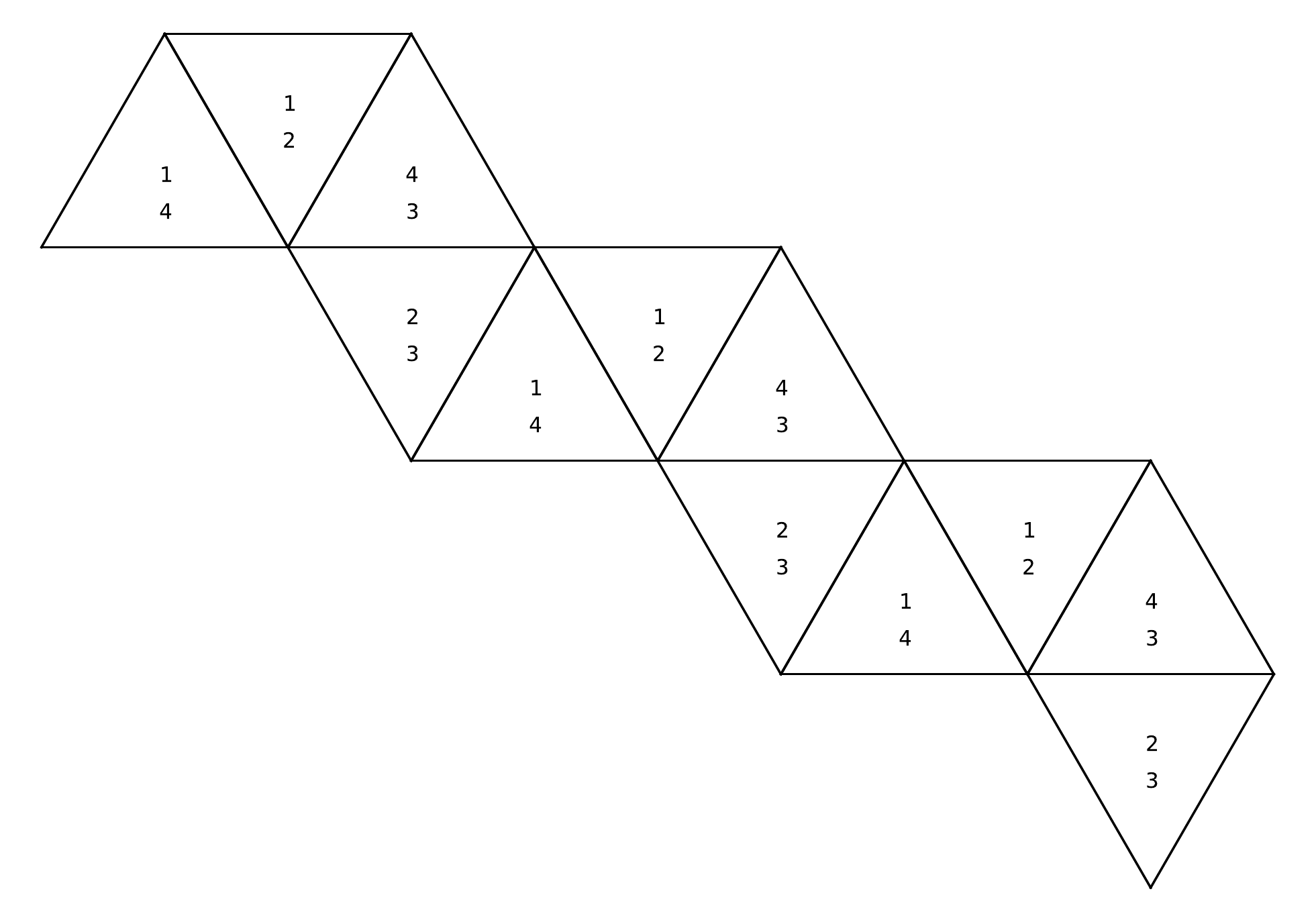}
\end{center}

From the tetrahexaflexagon pattern, it initially seems that we can extend the pattern in a number of ways.  However all of these choices will give rise to the same quintahexaflexagon and diagram up to cyclicly permuting the labels, reversing the labels, and reversing the signs.  If we choose the representative pattern with the most positive signs, ordered counterclockwise, we have the following pattern:

\begin{center}
\includegraphics[width=2.25in]{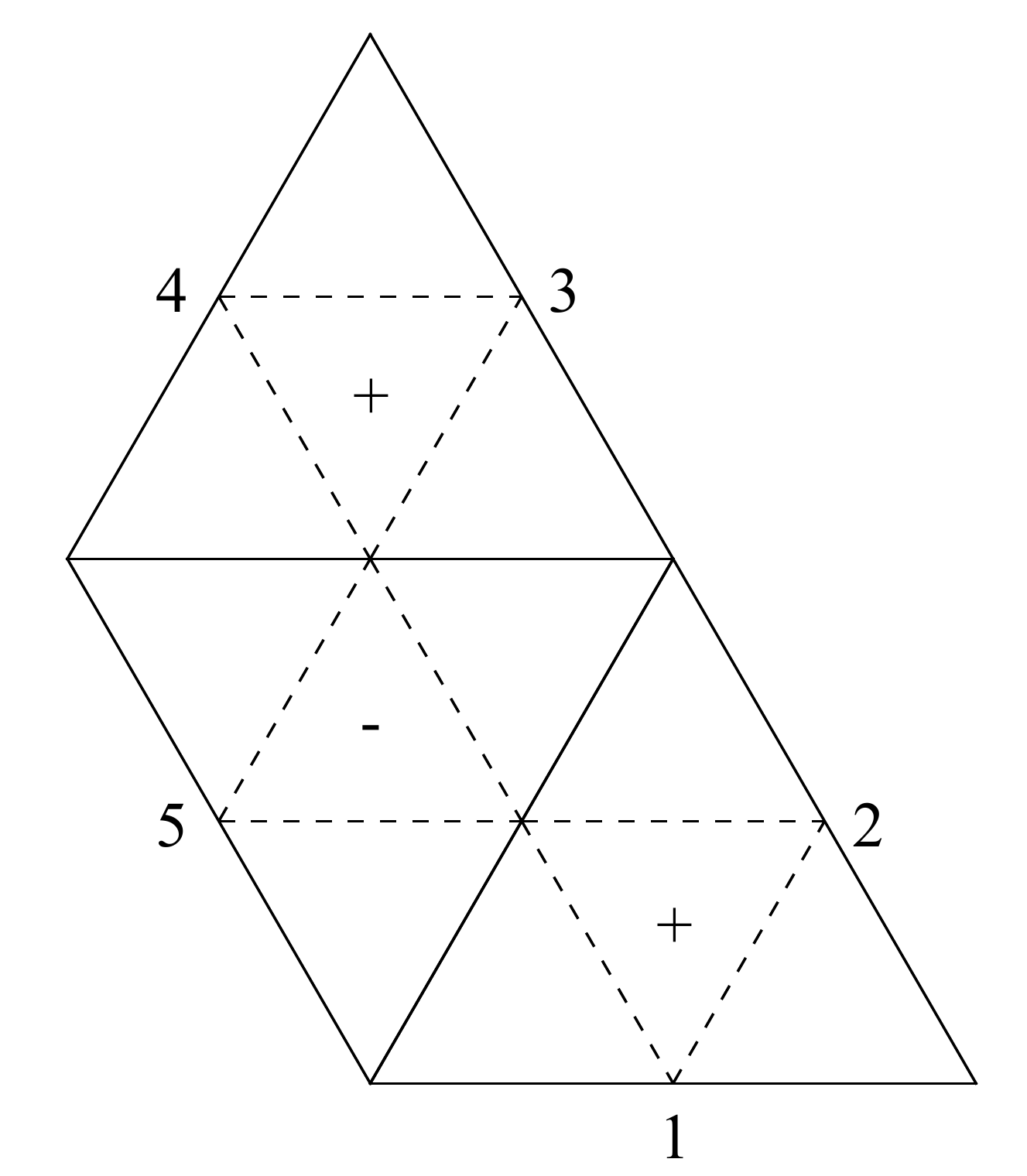}
\end{center}

Again following our procedure we generate the following representative of the quintahexaflexagon:

\begin{center}
\includegraphics[width=3in]{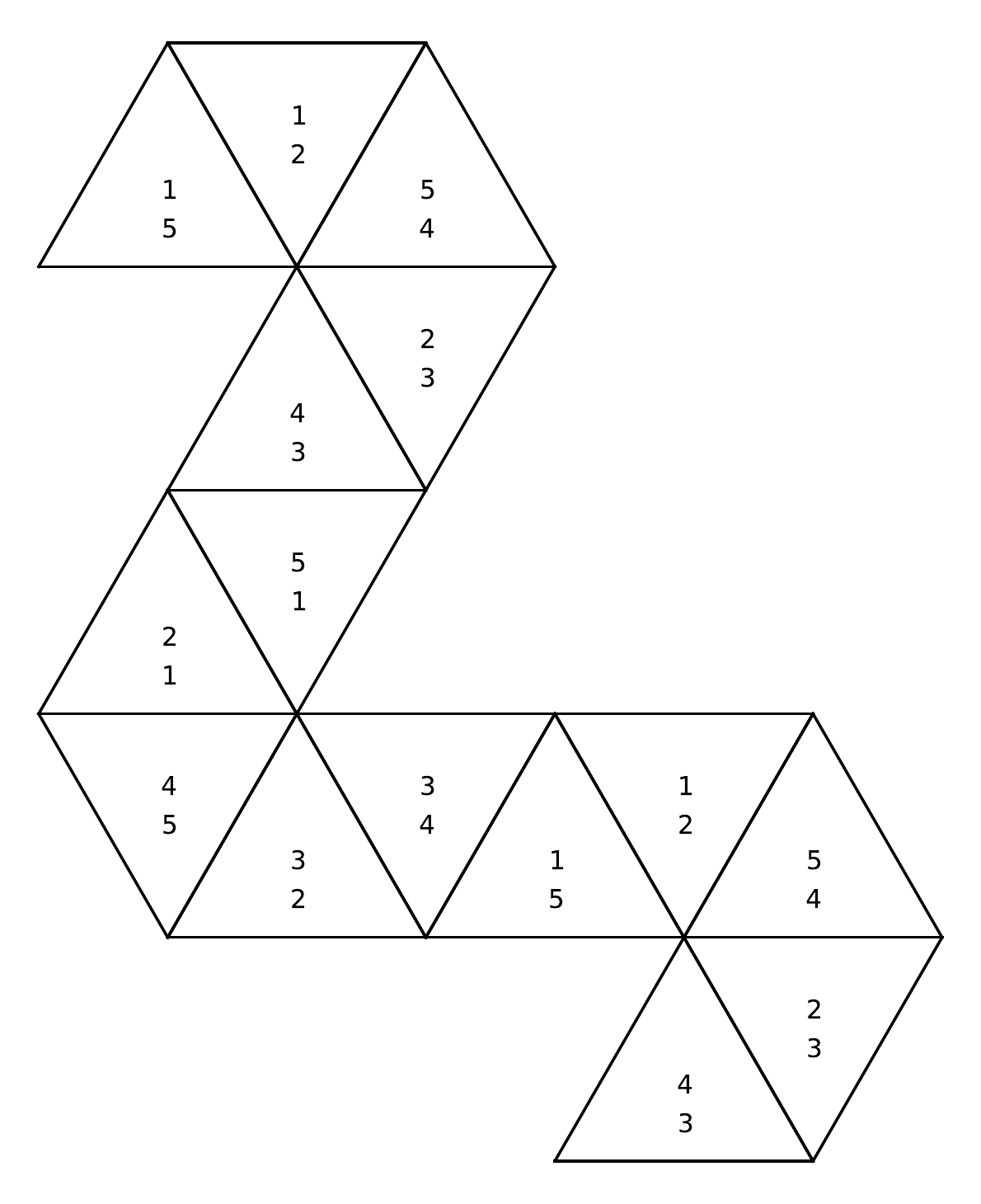}
\end{center}

Finally we will illustrate what happens for six faces.  Here there are three distinct patterns and hexahexaflexagons.  The first is the classic hexahexaflexagon, with a sign sequence of $(1,1,1,1,1,1)$.

\begin{center}
\includegraphics[width=2.5in]{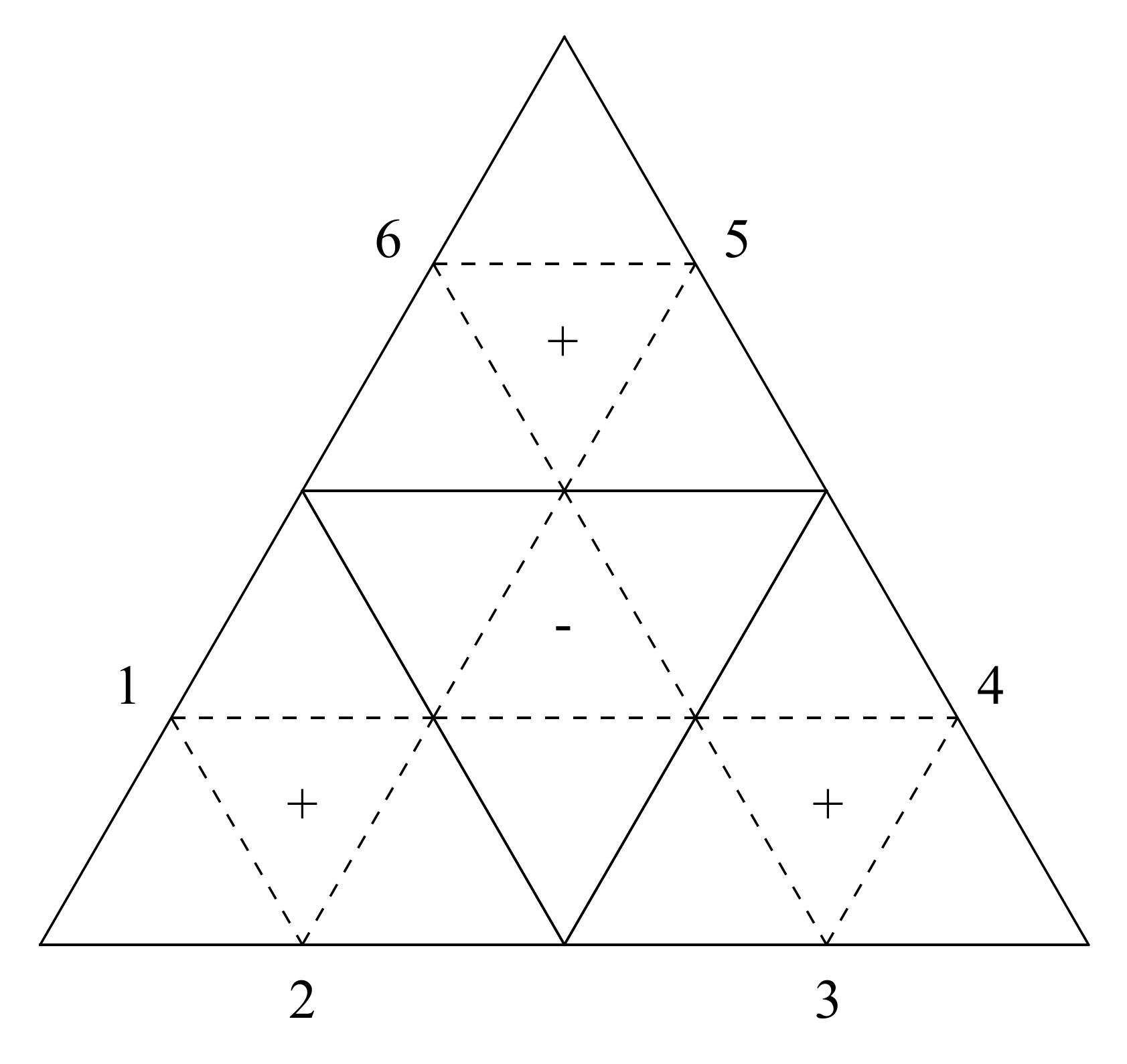}
\end{center}

\begin{center}
\includegraphics[width=5in]{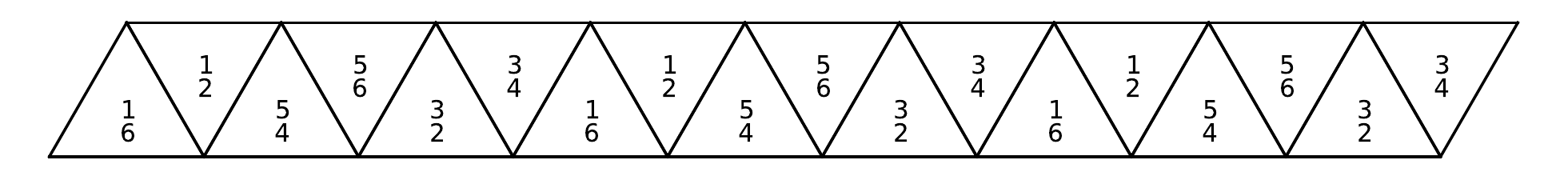}
\end{center}

The second hexahexaflexagon has a sign sequence of $(1,1,1,-1,-1,-1)$:

\begin{center}
\includegraphics[width=2in]{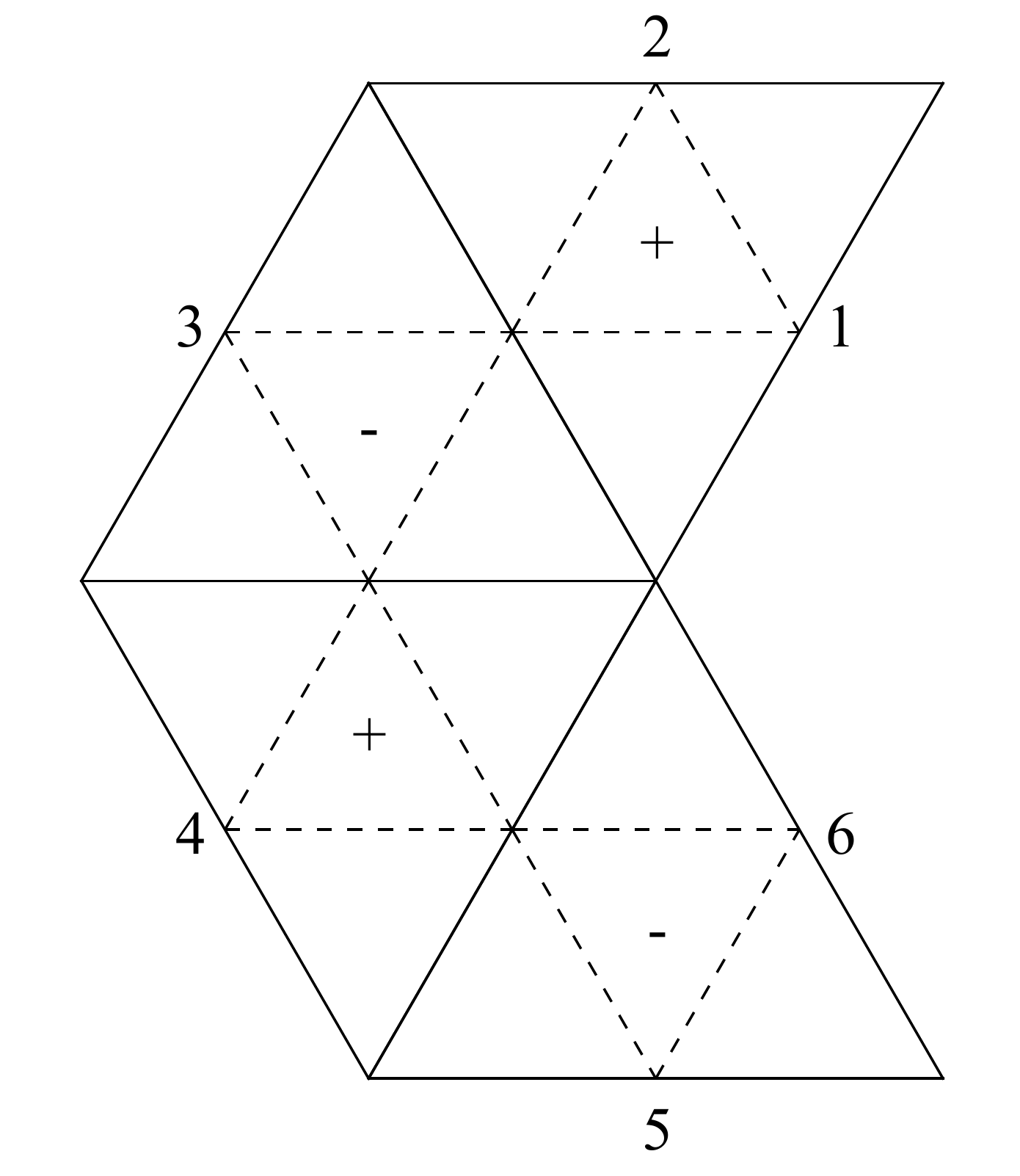}
\end{center}

\begin{center}
\includegraphics[width=3in]{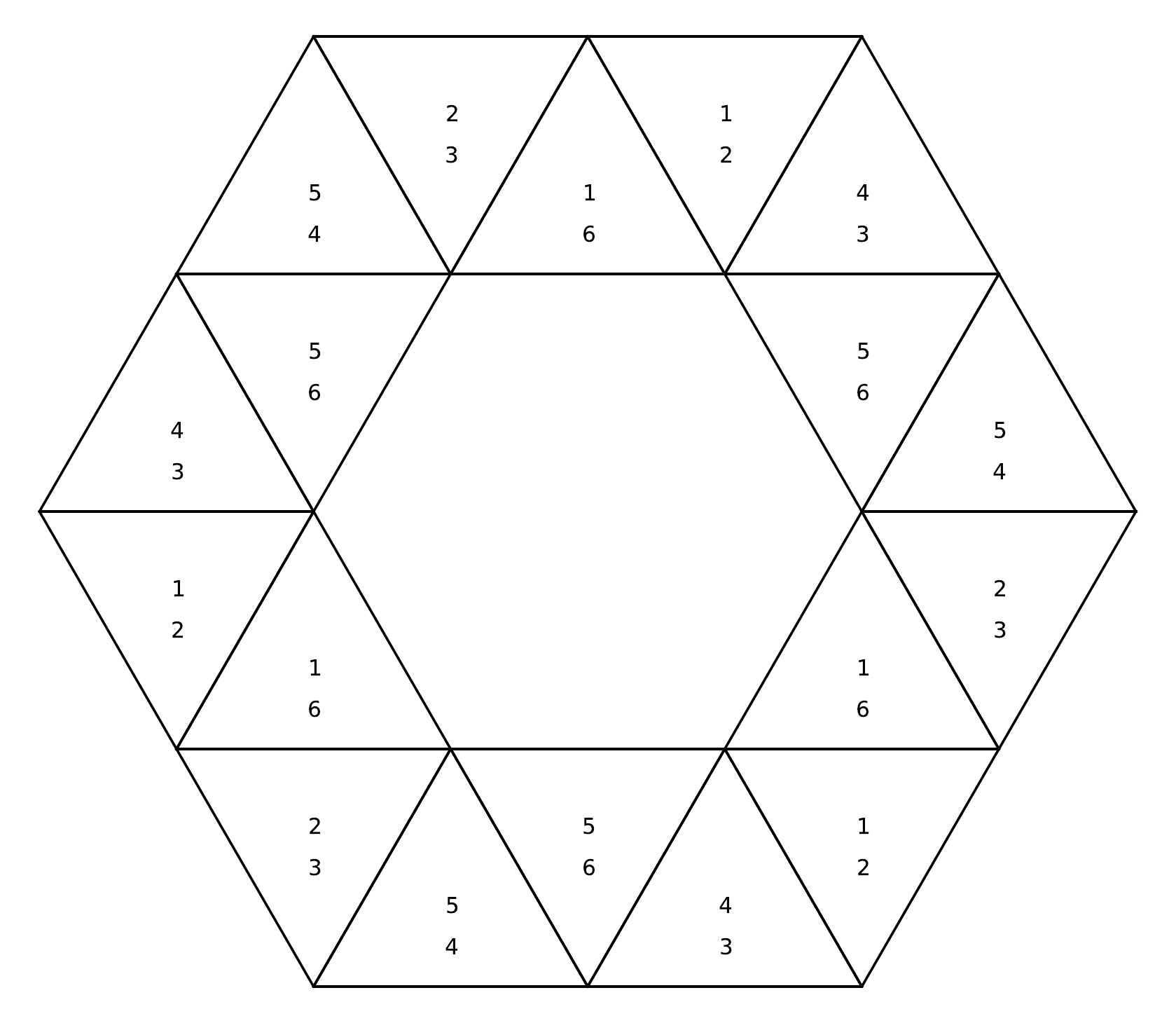}
\end{center}

Finally the third hexahexaflexagon has a sign sequence of $(1,1,-1,1,-1,-1)$:

\begin{center}
\includegraphics[width=3in]{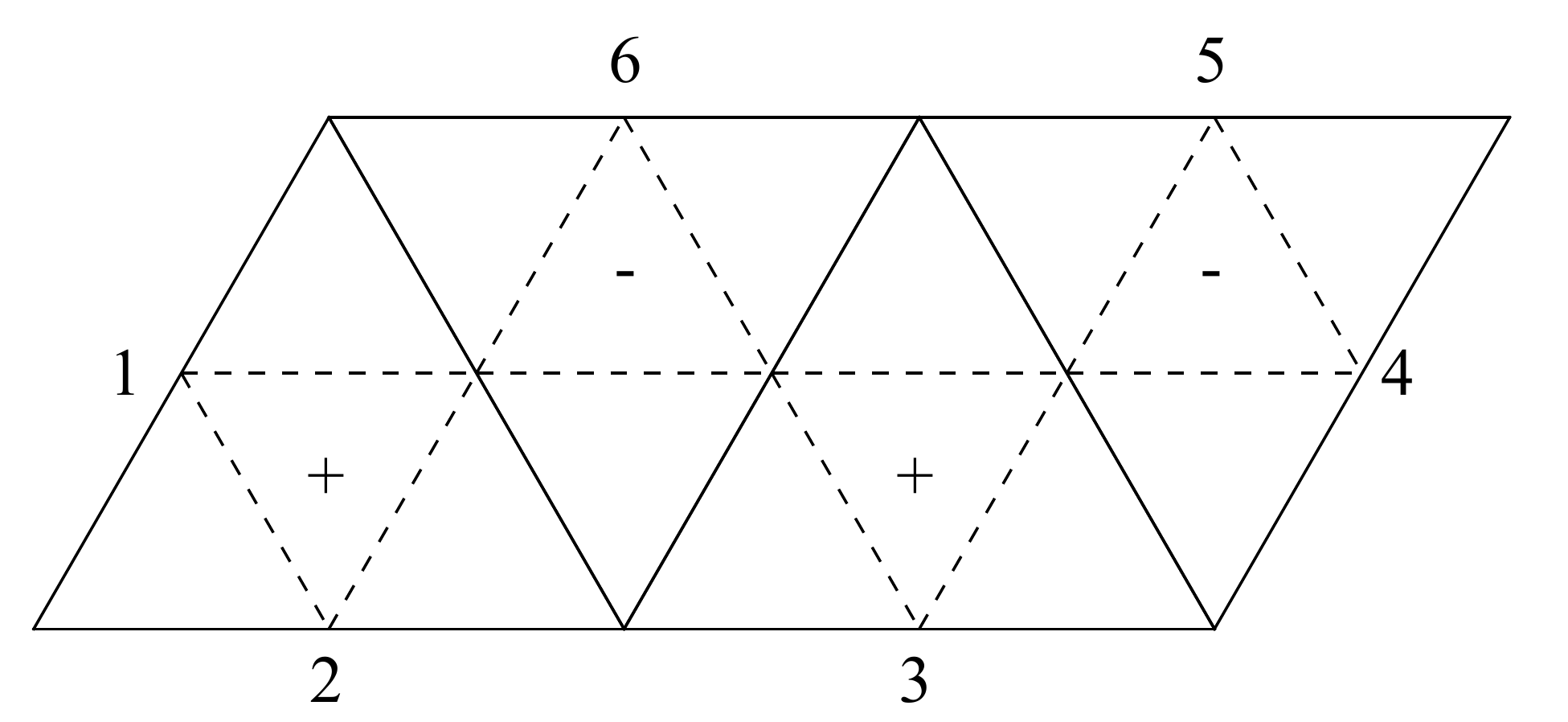}
\end{center}

\begin{center}
\includegraphics[width=3in]{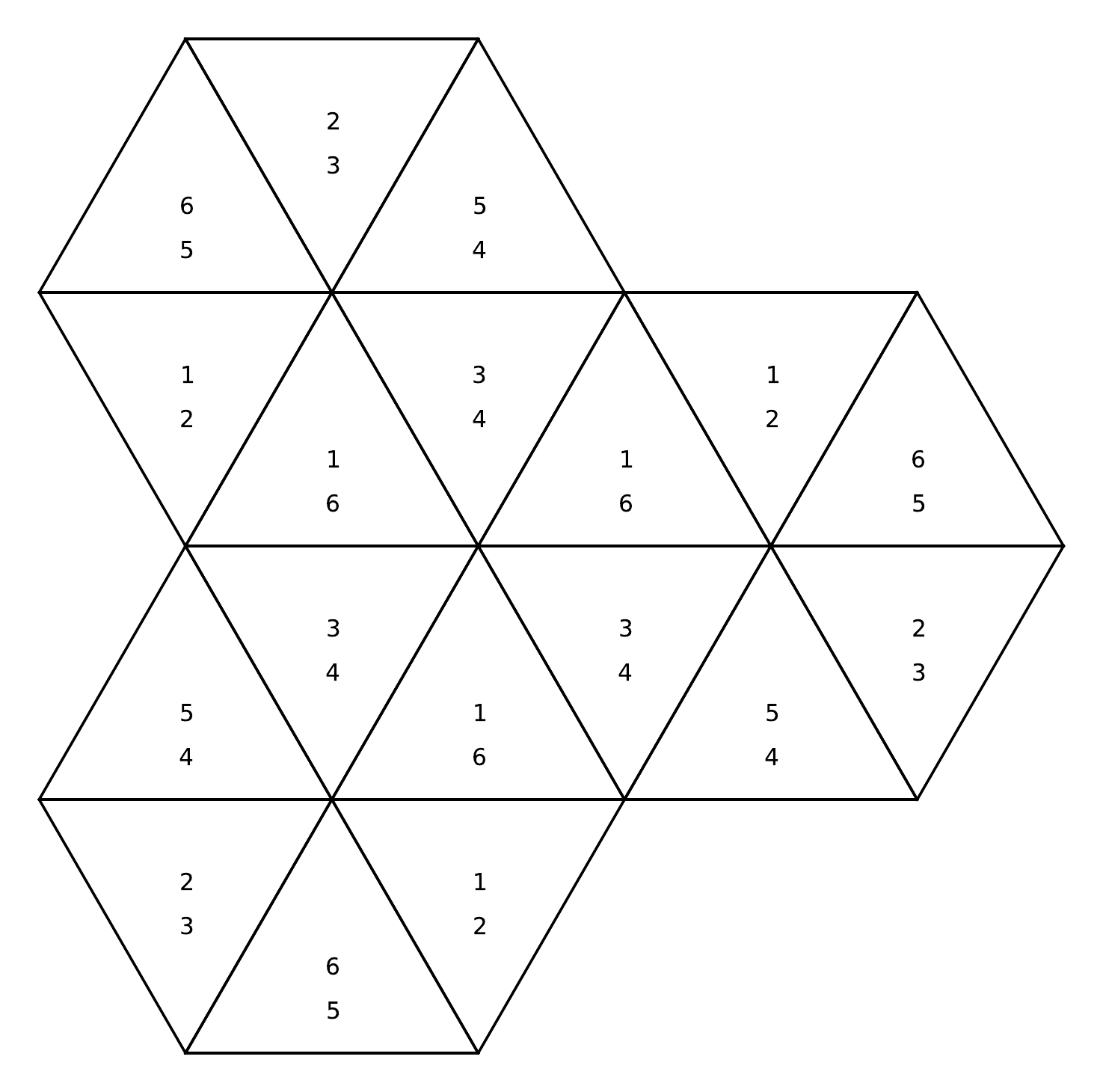}
\end{center}

\section{Counting hexaflexagons}

There are many ways one can count hexaflexagons and their foldings \cite{Maunsell1954, OakleyWisner, HPW1997, VflexIacob}.  Here we focus on the possible shapes of unfolded hexaflexagons, considering as equivalent cyclic shifts, reversals, and mirror images.  For a sign sequence $(a_1, a_2, \ldots, a_n)$, with $a_i \in \{1,-1\}$, this corresponds to the equivalence relations

$$\text{ cyclic shifts: }  (a_1, a_2, \ldots, a_n)  \sim (a_{i}, a_{i+1}, \ldots, a_n, a_1, a_2, \ldots a_{i-1}) $$
$$\text{ reversal: } ( a_1, a_2, \ldots, a_n)  \sim (a_n, a_{n-1}, \ldots, a_1) $$
$$\text{ inversion: }  (a_1, a_2, \ldots, a_n)  \sim (-a_1, -a_2, \ldots, -a_n) $$

We will adopt the convention of using representatives of equivalence classes of sign sequences which have a non-negative sum (i.e. $\sum_{i=1}^n a_i \ge 0$).

When we extend a hexaflexagon pattern with a new face, it is equivalent to replacing an element $a_i$ in the sign sequence by a pair of opposite signs $(-a_i,-a_i)$.  This means that every sign sequence will have at least one pair of equal adjacent signs.  

Since the initial trihexaflexagon has a representative sign sequence $(1,1,1)$ which sums to $3$, every extension of it to a four-face hexaflexagon will have sum $0$.  Further extensions will change the sum by $\pm 3$, so a sign sequence of length $n$ will have a sum in the set 

$$S(n) = \left \{ \begin{array}{l} \{-n, -n-6, \ldots n-6, n  \} \ \ \ \text{if $n \equiv 0$ mod 3}\\
\{-n+4, -n+10, \ldots n-10, n-4  \} \ \ \ \text{if $n \equiv 1$ mod 3}  \\
\{-n+2, -n+8, \ldots n-8, n -2 \} \ \ \ \text{if $n \equiv 2$ mod 3} 
\end{array} \right.$$

Remarkably these are the only constraints on the possible sign sequences:

\begin{lemma}
The set of all representatives of equivalence classes of sign sequences of hexaflexagons of length $n$ obtainable from extending the trihexaflexagon consists of all sequences with at least one pair of equal adjacent signs and with sum $\sum_{i=1}^n a_i \in S(n)$.
\end{lemma}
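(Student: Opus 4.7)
The lemma has two directions. The forward direction, that every sign sequence obtained by iterated extension from $(1,1,1)$ has at least one adjacent equal pair and sum in $S(n)$, is essentially established in the paragraphs preceding the statement: replacing $a_i$ by $(-a_i,-a_i)$ creates the adjacent pair $(-a_i,-a_i)$ and changes the sum by $-3a_i \in \{\pm 3\}$, so induction on the number of extensions from the trihexaflexagon confirms the sum stays in $S(n)$.

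For the reverse direction I would use strong induction on $n$. The base case $n=3$ is immediate, since $S(3) = \{-3,3\}$ forces the unique non-negative representative $(1,1,1)$, which \emph{is} the trihexaflexagon. For $n \geq 4$, let $\mathbf{a}$ be a length-$n$ representative with $s = \sum a_i \in S(n)$ and at least one adjacent equal pair; by our convention I may assume $s \geq 0$. The plan is to collapse a carefully chosen $(+1,+1)$ pair back into a single $-1$ (the reverse of the extension operation), obtain a length-$(n-1)$ sequence still satisfying the lemma's hypotheses, apply the inductive hypothesis to conclude it arises from iterated extension of $(1,1,1)$, and then re-extend.

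Three facts would drive the reduction. First, a $(+1,+1)$ pair always exists: with $s \geq 0$ there are at least $\lceil n/2 \rceil$ plus-ones, and if no two were cyclically adjacent the signs would have to alternate, leaving no pair at all and contradicting the hypothesis. Second, collapsing such a pair changes the sum by $-3$, and $s - 3$ lies in $S(n-1)$ up to equivalence (applying inversion to restore non-negativity if $s-3 < 0$); this reduces to a short direct check by residues of $n$ modulo $3$ using the explicit description of $S(n)$. Third, the $(+1,+1)$ pair can be chosen so that the result is still non-alternating. For this I would split on the cyclic run structure: (i) if at least two maximal runs have length $\geq 2$, collapse inside a long $+1$ run (one must be present, since otherwise the $+1$'s are isolated and the sum calculation together with the pair hypothesis forces a contradiction) and another long run survives intact; (ii) if the sequence is constant, so $s = n$ with $n \equiv 0 \pmod 3$ and $n \geq 6$, any collapse leaves $n-2 \geq 4$ consecutive plus-ones; (iii) if exactly one long run exists, it must be a $+1$ run of length $k$, and $s = k-1 \in S(n)$ forces $k \equiv 1 \pmod 3$, so $k \geq 4$, and an \emph{edge} collapse of the run leaves a shortened $+1$ run of length $k-2 \geq 2$ together with an adjacent $-1$ run extended to length $2$.

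The main obstacle is the third fact. In case (iii) a careless interior collapse of a long run of length exactly $4$ would split it into two singletons and, combined with the surrounding length-$1$ runs, produce a fully alternating sequence with no pair, breaking the induction. The saving point is the divisibility-by-$3$ condition that every $s \in S(n)$ inherits from the arithmetic progressions defining $S(n)$: this excludes single-long-run configurations of length $2$ or $3$ and guarantees $k \geq 4$, which is what makes the edge-collapse recipe safe and closes the induction.
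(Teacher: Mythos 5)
Your proposal is correct and follows the same overall strategy as the paper: necessity from the remarks preceding the lemma, and sufficiency by induction on $n$, un-doing one extension step by collapsing an adjacent equal pair into a single opposite sign. The one substantive difference is in how the key difficulty --- guaranteeing the collapsed sequence still contains an adjacent equal pair --- is handled. You fix attention on a $(+1,+1)$ pair and then run a three-way case analysis on the cyclic run structure, with the divisibility-by-$3$ of elements of $S(n)$ rescuing the single-long-run case; the paper instead allows the collapsed pair to have either sign and simply rotates so that the pair sits at the boundary of a maximal run, whence the new element $-a_0$ automatically equals its cyclic predecessor $a_n = -a_0$ and the required pair appears for free (with the constant sequence handled separately). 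The paper's choice eliminates your cases (i)--(iii) entirely, at the cost of having to check that the sum lands in $S(n-1)$ for a collapse of either sign rather than just for $s \mapsto s-3$; your write-up is actually more explicit than the paper's about that sum verification and about why a collapsible pair exists at all, both of which the paper leaves implicit.
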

\begin{proof}
Our remarks preceding the lemma show that the given constraints are necessary.  We can complete the proof with induction. We know the lemma is true for $n<7$ by our previous constructions.  Consider a sequence of length $n>4$ that satisfies the constraints.  Under the cyclic shift equivalence, we can assume without loss of generality that $a_0 = a_1$ and that either $a_n = -a_0$ (if there are any changes of sign in the sequence) or $a_2 = a_3$.  In either case, the sequence of length $n-1$ obtained by replacing the pair $a_0,a_1$ by $-a_0$ satisfies the constraints.
\end{proof}

Fortunately we can make use of the extensive previous work done on enumerating binary necklaces and bracelets to count hexaflexagon equivalence classes.  A binary necklace is an equivalence class of sequences of two symbols, where cyclicly shifted sequences are considered equivalent.  A binary bracelet is an equivalence class of necklaces considered equivalent under reversal.  Our sign sequences of length $n$ with $k$ 1s and $n-k$ -1s correspond to bracelets and necklaces with $n$ beads, $k$ of which are black and $n-k$ of which are white.

The number of binary necklaces of length $n$ with $k$ ones is equal to (cf. \cite{KR}):
$$N(n,k) = \frac{1}{n} \sum_{j | gcd(n,k)} \phi(j) \left (  \begin{array}{c} n/j \\ k/j \end{array} \right )$$
where $\phi(j)$ is the Euler totient function and $\displaystyle \left (  \begin{array}{c} n/j \\ k/j \end{array} \right )$ is a binomial coefficient.

The number of binary bracelets of length $n$ with $k$ ones is also known \cite{Karim}:
$$B(n,k) = \frac{1}{2}N(n,k) + \left \{ \begin{array}{rr}  \frac{1}{2} \left (  \begin{array}{c} (n-1)/2 \\ (k-1)/2 \end{array} \right ) & \text{ if $n$ and $k$ are odd }\\
& \\
\frac{1}{2} \left (  \begin{array}{c} (n-1)/2 \\ k/2\end{array} \right ) & \text{ if $n$ is odd and $k$ is even }\\
& \\
\frac{1}{4} \left (  \begin{array}{c} n/2 - 1\\ k/2\end{array} \right ) + \frac{1}{4} \left (  \begin{array}{c} n/2 \\ k/2 -1\end{array} \right ) &  \text{ if $n$ and $k$ are even } \\
& \\
\frac{1}{2} \left (  \begin{array}{c} n/2 - 1 \\ (k-1)/2\end{array} \right ) & \text{ if $n$ is even and $k$ is odd }\\
 \end{array} \right .$$
 
We will also need the well-known (cf. \cite{Moree}) formula for the number of aperiodic necklaces (also known as Lyndon words) of $n$ beads with $k$ black beads:
 
 $$L(n,k) = \frac{1}{n} \sum_{j | gcd(n,k)} \mu(j) \left (  \begin{array}{c} n/j \\ k/j \end{array} \right )$$
 where $\mu(j)$ is the M\"{o}bius function (equal to the sum of the primitive $j$th roots of unity).
 
 Armed with these formulae we can prove the following:
 
 \begin{theorem}
 The number of hexaflexagon equivalence classes with $n$ faces is 
 
 $$H(n) = \left \{ \begin{array}{c}
 \sum_{i=0}^{\lfloor (n-2)/6 \rfloor}  B(n, \lceil n/2 \rceil + 1 + 3i) \ \ \text{ for $n$ odd} \\
-  B(n,n/2)/2 + F(n)/2 - 1 + \sum_{i=0}^{\lfloor n/6 \rfloor}  B(n, \lceil n/2 \rceil  + 3i) \ \  \text{ for $n$ even}  
\end{array} \right . $$
where
$$F(n) = \sum_{k=1}^{n/2} \sum_{L | gcd(n/2,k)} L(\frac{n}{2 l},\frac{k}{l}) \lceil \frac{k}{2 l} \rceil .$$
 \end{theorem}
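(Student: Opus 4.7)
The plan is to combine the lemma with two applications of the Cauchy--Frobenius orbit-counting lemma, first to pass to bracelets and then to quotient by inversion. Writing $k$ for the number of $+1$s in a sign sequence of length $n$, so $\sum a_i = 2k-n$, the nonnegative-sum convention becomes $k \ge \lceil n/2 \rceil$ and the condition $\sum a_i \in S(n)$ becomes that $k$ lies in an arithmetic progression with common difference $3$; inspecting $S(n)$ in each residue of $n$ modulo $6$ shows that the smallest admissible $k$ is $\lceil n/2 \rceil + 1$ when $n$ is odd (corresponding to sum $3$, which is in $S(n)$ for every odd $n \ge 3$) and $n/2$ when $n$ is even (corresponding to sum $0$, which is in $S(n)$ for every even $n$). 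The caps $\lfloor (n-2)/6 \rfloor$ and $\lfloor n/6 \rfloor$ in the theorem are just the largest $i$ for which $k = \lceil n/2\rceil + 1 + 3i$ or $k = n/2 + 3i$ still satisfies $k \le n$.

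For $n$ odd, every admissible $k$ satisfies $k > n/2$, so inversion sends $k \mapsto n-k < n/2$ and the nonnegative-sum convention selects a unique representative from each inversion orbit; moreover no alternating bracelet exists when $n$ is odd, so the adjacent-equal-pair condition from the lemma is automatic. Summing $B(n,k)$ over the admissible $k$'s yields the odd formula immediately.

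For $n$ even, the terms with $k > n/2$ are treated identically. For $k = n/2$ inversion acts on the set of $B(n, n/2)$ bracelets, and by Cauchy--Frobenius the number of orbits under this $\mathbf{Z}/2$-action is $\bigl(B(n, n/2) + F(n)\bigr)/2$, where $F(n)$ is the number of bracelets with $n/2$ ones that are fixed by inversion. The alternating bracelet is the unique bracelet with no adjacent equal pair; it has $k = n/2$ and is inversion-invariant, so it must be subtracted once, producing the $-1$. Regrouping the total as $-B(n,n/2)/2 + F(n)/2 - 1 + \sum_{k \ge n/2} B(n,k)$ gives the even formula.

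The remaining step, and the main technical obstacle, is to derive the stated expression for $F(n)$. I would apply Cauchy--Frobenius a second time, now to the enlarged group $D_n \times \langle \mathrm{inv}\rangle$, giving $F(n) = \frac{1}{2n} \sum_{g \in D_n} \#\{a : g(a) = -a\}$. For a rotation $g = r^j$ the equation $r^j(a) = -a$ forces $a$ to have period $\gcd(2j,n)$ and to be anti-symmetric across a half-period shift, contributing $2^{\gcd(2j,n)/2}$ sequences under a parity condition on $j$ and zero otherwise; for reflections only the edge-axis reflections contribute, since vertex-axis reflections force the impossible identity $a_i = -a_i$ at the fixed positions. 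To convert these raw counts into bracelet counts, I would group sequences by their minimal anti-period and apply M\"obius inversion on the divisor lattice of $n/2$: each inversion-invariant bracelet is then parameterized by an aperiodic Lyndon block of length $n/(2l)$ with $k/l$ ones for some $l \mid \gcd(n/2, k)$, producing the factor $L(n/(2l), k/l)$, while the ceiling $\lceil k/(2l) \rceil$ arises as the parity-dependent multiplicity with which the rotational and reflectional anti-symmetries combine. The crux is executing the M\"obius inversion cleanly so that every inversion-invariant bracelet is counted exactly once, even when it admits several distinct witnessing dihedral elements, and so that the double sum over conjugacy classes collapses to the compact Lyndon-word expression in the theorem.
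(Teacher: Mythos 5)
Your treatment of everything except $F(n)$ matches the paper's argument essentially step for step: for odd $n$ the nonnegative-sum convention picks one representative per inversion orbit, the admissible values of $k$ form the arithmetic progression $\lceil n/2\rceil+1+3i$, and the count is a straight sum of bracelet numbers; for even $n$ you correctly isolate the $k=n/2$ stratum, apply Cauchy--Frobenius to the inversion action to get $\bigl(B(n,n/2)+F(n)\bigr)/2$, and subtract $1$ for the alternating bracelet, which is exactly the regrouping that produces the paper's $-B(n,n/2)/2+F(n)/2-1$ term. Your identification of the index ranges and of the alternating sequence as the unique bracelet violating the adjacent-equal-pair condition of the Lemma is also correct.

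The genuine gap is the derivation of the closed form for $F(n)$, which is the only nontrivial identity in the theorem and which you explicitly defer (``the crux is executing the M\"obius inversion cleanly''). Your plan --- compute $\frac{1}{2n}\sum_{g\in D_n}\#\{a: g(a)=-a\}$ by conjugacy class and then M\"obius-invert to reach the Lyndon-word expression --- is a plausible but unexecuted program, and it is not obvious that it lands on the stated formula; in particular the index $k$ in the paper's $F(n)$ is not the number of ones in a half-necklace, as your sketch assumes, but the number of \emph{parts} in a cyclic composition of $n/2$. The paper's own route is bijective rather than character-theoretic: it invokes the Knopfmacher--Robbins correspondence between necklaces of $n/2$ beads with $k$ black and cyclic compositions of $n/2$ into $k$ parts, views a self-conjugate sequence as two identical interwoven necklaces of length $n/2$, stratifies by minimal period $l$ (whence the Lyndon count $L(n/(2l),k/l)$), and identifies $\lceil k/(2l)\rceil$ directly as the number of distinct interweavings. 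Until your M\"obius inversion is actually carried through and shown to collapse to that double sum, the even-$n$ half of the theorem is not proved.
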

 \begin{proof}

For odd $n$ we can assume that there are more $1$s than $-1$s in the sign sequence, and the number of distinct hexaflexagon patterns with $n$ faces is
$$\sum_{i=0}^{\lfloor (n-2)/6 \rfloor}  B(n, \lceil n/2 \rceil + 1 + 3i) $$
since we can generate any bracelet of the length $n$ with a total of $\lceil n/2 \rceil + 1 + 3i$ 1s in its sign sequence for $i \in (0, \lfloor (n-2)/6 \rfloor)$. 


For even $n$ the counting is a little more complicated since there are some sign sequences which are equivalent to each other under the interchange of $1$s and $-1$s (the inversion equivalence).  There is also one sign sequence for even $n$ which cannot be obtained from the trihexaflexagon, the alternating sequence $(1,-1,1,-1,\ldots,1,-1)$, since this does not contain a pair of equal adjacent entries.  Taking these two issues into account we obtain for even $n$:

$$ H(n)  = -  B(n,n/2)/2 + F(n)/2 - 1 + \sum_{i=0}^{\lfloor n/6 \rfloor }  B(n, \lceil n/2 \rceil  + 3i) $$
 where $F(n)$ is the number of binary bracelets with equal numbers of positive and negative entries which are fixed under inversion; we will call these self-conjugate.  
 
 
The number of self-conjugate sign sequences of length $n$, $F(n)$, can be computed in terms of $L(n,k)$, using a bijective correspondence noted in \cite{KR}: the number of necklaces of $n$ beads with $k$ black and $n-k$ white, which is our $N(n,k)$, is equal to the number of cyclic compositions of $n$ into $k$ parts.  A self-conjugate sequence can be thought of as two identical interwoven necklaces of length $n/2$, each with $k$ parts.  To count the distinct interweavings we decompose the number of such necklaces by their minimal periodicity, yielding

 $$F(n) = \sum_{k=1}^{n/2} \sum_{L | gcd(n/2,k)} L(\frac{n}{2 l},\frac{k}{l}) \lceil \frac{k}{2 l} \rceil$$
 
 The factor $\lceil \frac{k}{2 l} \rceil$ is the number of ways of interweaving the two identical necklaces.
 
 
 \end{proof}
 
 In the following table we have computed the number of distinct hexaflexagons $H(n)$ for each $n$ from $3$ to $26$, as well as the number $H_p(n)$ of hexaflexagons which are printable in the sense that the unfolded hexaflexagon does not overlap itself.  
 
\vspace{.2in}
\begin{center}
\begin{tabular}{|c|c|c|c|c|c|c|} \hline
$N$ & $H(N)$ & $H_p(N)$ & \hspace{.2in} &  N & $H(N)$ & $H_p(N)$\\ \hline
3 & 1 & 1 & & 15 & 205 & 78 \\ \hline
4 & 1 & 1 & &  16 & 411 & 144 \\ \hline
5 & 1 & 1 & &  17 & 685 & 224 \\ \hline
6 & 3 & 3  & &  18 & 1353 & 421 \\ \hline
7 & 3 & 2  & &  19 & 2385 & 648 \\ \hline
8 & 7 & 5  & &  20 & 4643 & 1185 \\ \hline
9 & 8 & 6  & &  21  & 8496 & 1990 \\ \hline
10 & 17 & 10  & &  22  & 16430 & 3668 \\ \hline
11 & 21 & 11  & &  23  & 30735 & 6095 \\ \hline
12 & 47 & 21  & &  24  & 59343 & 11079 \\ \hline
13 & 63 & 29  & &  25  & 112531 & 19098 \\ \hline
14 & 132 & 58  & &  26 & 217245 & 34891 \\ \hline
\end{tabular}
\end{center}
\vspace{.2in}

The first example of a non-printable hexaflexagon is for $n=7$, with the representative sign sequence $(1, 1, 1, 1, -1, 1, -1)$.  This sign sequence corresponds to the path shown below:

\begin{center} \begin{figure}[h!t]
\includegraphics[width=2.5in]{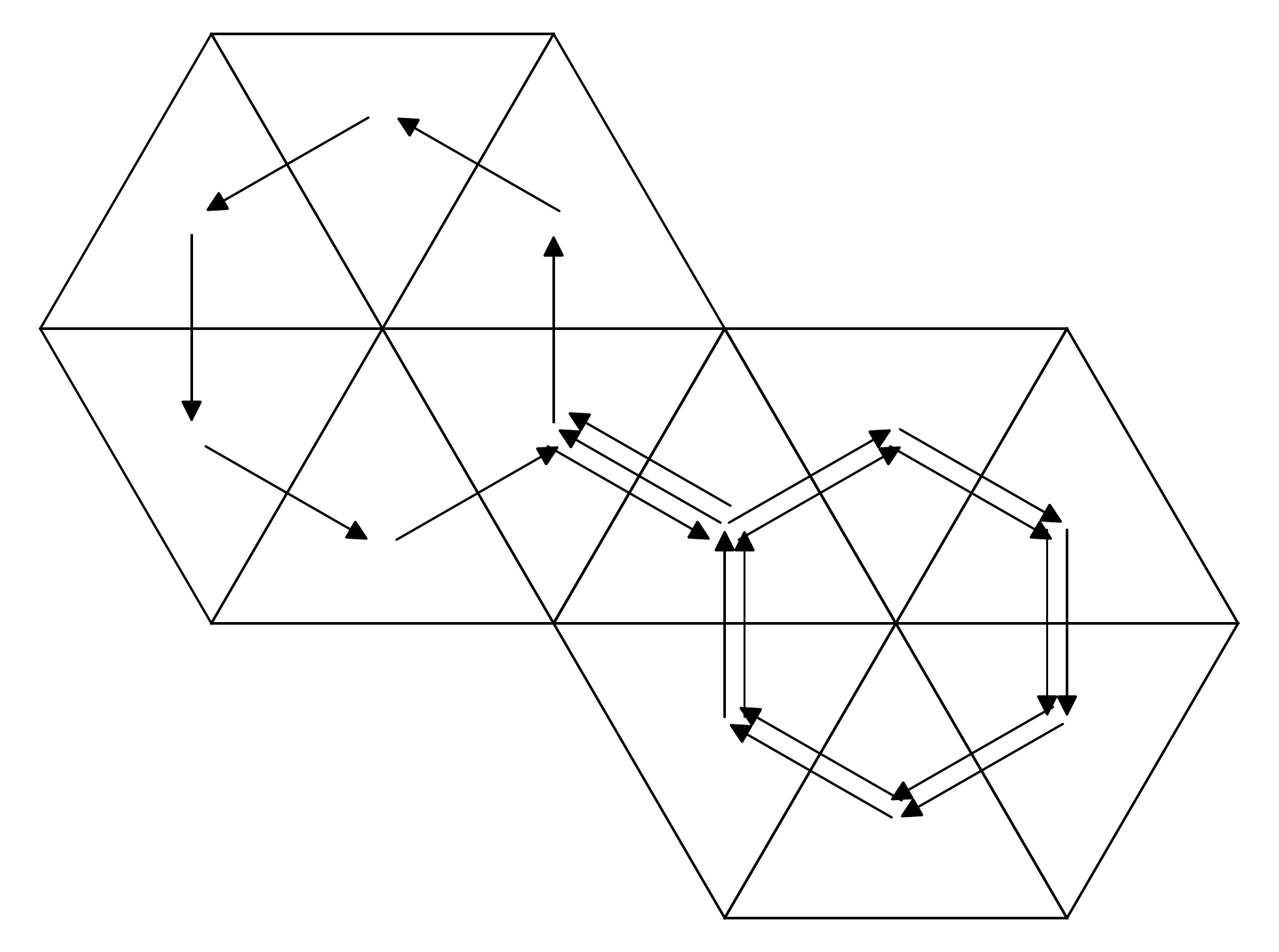}
 \end{figure} \end{center} 

We do not presently have a formula for $H_p$, and it seems that this would be quite difficult to obtain explicitly. 

\section{Acknowledgements}

The author thanks Vi Hart for her wonderful videos, which introduced him to this subject, and Richard J. Mathar for finding errors in a previous version of this article.

\end{document}